\newtheorem{thm}{Theorem}[section]
\newtheorem{cor}[thm]{Corollary}
\newtheorem{prop}[thm]{Proposition}
\newtheorem{lem}[thm]{Lemma}
\theoremstyle{definition}
\theoremstyle{remark}
\let\c@equation\c@thm
\numberwithin{equation}{section}
\newcommand{\ov}[1]{\,\overline{#1}}
\DeclareMathOperator{\ad}{ad}
\DeclareMathOperator{\GL}{GL}
\DeclareMathOperator{\PGL}{PGL}
\DeclareMathOperator{\PSL}{PSL}
\DeclareMathOperator{\PSU}{PSU}
\DeclareMathOperator{\Ind}{Ind}
\DeclareMathOperator{\tr}{tr}
\DeclareMathOperator{\Gal}{Gal}
\DeclareMathOperator{\Frob}{Frob}
\newcommand{\eps}{\varepsilon}
\newcommand{\Z}{\mathbf{Z}}
\newcommand{\Q}{\mathbf{Q}}
\newcommand{\C}{\mathbf{C}}
\newcommand{\Qbar}{\ov{\Q}}
\newcommand{\F}{\mathbf{F}}
\title[Potential automorphy in dimension 3]{Potential automorphy of certain non self-dual 3-dimensional Galois representations}
\author{Konstantin Miagkov} 
\date{}
\begin{document}

\begin{abstract} 
In a series of papers, van Geemen and Top have defined a family of surfaces $S_z$ indexed by a nonzero integer parameter $z$, and a compatible family of 3-dimensional Galois representations over $\Q(i)$ attached to each surface. In this note we use recent advancements in potential automorphy and automorphy lifting to show that these compatible families are potentially automophic for all values of $z$, and hence that their L-functions have analytic continuation and a functional equation.
\end{abstract}

\maketitle

\tableofcontents

\section{Introduction}
\subsection{Results}

Fix an isomorphism $\iota_{l} \colon \overline{\Q}_{l} \cong \C$ for every rational prime $l$.
Also choose a positive integer $z \geq 2$. By decomposing the transcendental part of the $l$-adic cohomology of
a projective smooth surface $S$ whose affine model is
\[ t^2 = xy(x^2 - 1)(y^2 - 1)(x^2 - y^2 + zxy), \]
van Geemen-Top constructed three-dimensional $l$-adic representations
\[ \rho_{\lambda} \colon G_{\Q} \to \GL_3(\Q(i)_{\lambda}). \]
Here $\lambda$ is a finite place of $\Q(i)$. This provides a concrete example of a non self-dual compatible system.
Our main theorem is the following:
\begin{thm}\label{thm:main}
    For any nonzero integer $z$, the compatible system $\{\rho_{\lambda}\}$ is potentially automorphic.
\end{thm}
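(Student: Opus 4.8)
The plan is to verify that $\{\rho_\lambda\}$ satisfies the hypotheses of the recent potential automorphy theorems over CM fields for Galois representations that need not be (conjugate) self-dual, to apply such a theorem after restricting to an imaginary quadratic field, and to transfer the conclusion back to the whole compatible system over $\Q$ by a Chebotarev argument. First I would record the structural features of the system, most of which are in van Geemen--Top's work or follow from standard considerations: $\{\rho_\lambda\}$ is a weakly compatible system of continuous three-dimensional representations of $G_\Q$ with coefficients in $\Q(i)$; each $\rho_\lambda$ is geometric, being cut out as a $G_\Q$-stable direct summand of $H^2_{\mathrm{et}}(S_{z,\Qbar},\Q_\ell)$ by the order-four automorphism giving the $\Q(i)$-structure, so it is de Rham at the residue characteristic $\ell$ of $\lambda$ and unramified outside $\ell$ together with the finitely many bad primes of $S_z$; it is pure of weight $2$ by the Weil conjectures; and it is regular with Hodge--Tate weights $\{0,1,2\}$, the transcendental piece isolated by van Geemen--Top having Hodge type $(1,1,1)$. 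In particular regularity already forces every $\rho_\lambda$ to have infinite image, and we may assume each $\rho_\lambda$ is irreducible (otherwise its semisimplification decomposes into geometric subrepresentations of dimensions one and two, and the assertion follows from the known automorphy in dimensions $\leq 2$).

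Next I would identify the image. If $\rho_\lambda$ is induced from a character of an index-three subgroup of $G_\Q$, then that character is an algebraic Hecke character of a cubic field and $\rho_\lambda$ is automorphic by automorphic induction; so assume $\rho_\lambda$ is primitive. Then the Zariski closure $G$ of its image is reductive and acts irreducibly on the standard representation, and by primitivity and regularity its identity component $G^{\circ}$ acts irreducibly as well; since $\rho_\lambda$ is non-self-dual, $G^{\circ}$ admits no invariant bilinear form, and the only connected reductive subgroups of $\GL_3$ acting irreducibly and admitting no invariant bilinear form are those containing $\SL_3$. Hence $G^{\circ}\supseteq\SL_3$, and by Larsen's density theorem on large image in compatible systems (together with van Geemen--Top's explicit monodromy computations) there is a set of places $\lambda$ of density one for which the image of $\ov{\rho}_\lambda$ contains a conjugate of $\SL_3$ over its prime field, hence is adequate in the sense required by the automorphy lifting machinery; this survives restriction to $G_{\Q(i)}$, which has index two.

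I would then pick a prime $p\ge 5$ of good reduction for $S_z$ --- there are infinitely many, since the bad locus is finite for every $z$ --- lying below a place $\lambda$ in the density-one set just described. Then $\rho_\lambda|_{G_{\Q_p}}$ is a direct summand of the crystalline representation $H^2_{\mathrm{et}}(S_{z,\Qbar},\Q_p)$, hence crystalline with Hodge--Tate weights in a range of length two, hence Fontaine--Laffaille and in particular potentially diagonalizable; the same holds over $\Q(i)_v$ for $v\mid p$ since $p$ is unramified in $\Q(i)$. Thus $\rho_\lambda|_{G_{\Q(i)}}$ is a regular, de Rham, almost-everywhere-unramified representation of the absolute Galois group of a CM field which is potentially diagonalizable above $p$ and has adequate residual image. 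The recent potential automorphy theorem then yields a finite CM extension $F/\Q(i)$ and a regular algebraic cuspidal automorphic representation $\pi$ of $\GL_3(\A_F)$ such that $\rho_\lambda|_{G_F}$ is the representation attached to $\pi$ via $\iota_\ell$. Since $\pi$ carries its own weakly compatible system, agreeing with $\{\rho_\mu|_{G_F}\}_\mu$ at almost all finite places, Chebotarev shows $\rho_\mu|_{G_F}$ is automorphic for every $\mu$; enlarging $F$ to be Galois over $\Q$ exhibits $\{\rho_\lambda\}$ as potentially automorphic. The stated analytic continuation and functional equation of the $L$-functions then follow from the analytic theory of cuspidal $\GL_3$ $L$-functions, Brauer induction, and cyclic base change to descend from $F$ to $\Q$.

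The main difficulty is concentrated in invoking the potential automorphy theorem correctly. One needs a version valid in the genuinely non-self-dual setting, which is exactly what the recent advances cited in the introduction provide; one must know that its big-image hypothesis holds for a positive-density set of places \emph{for every} value of $z$ rather than merely for a generic member of the van Geemen--Top family --- this is where their explicit monodromy computations are essential; and, should the form of the theorem one uses require a preliminary potential residual automorphy input, one must supply it, either from the literature in the case of $\SL_3$-image or by a Moret--Bailly type argument on an auxiliary moduli space. The remaining points --- the local conditions at $p$ via Fontaine--Laffaille, the behavior at the bad primes after base change, and the descent to $\Q$ --- are routine given this machinery.
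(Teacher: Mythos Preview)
Your approach has a genuine gap in the irreducible case. You propose to use the Fontaine--Laffaille range (crystallinity in small weight) to obtain potential diagonalizability at the residue characteristic, and then invoke a potential automorphy theorem over a CM field. But in the genuinely non-self-dual setting over CM fields---which is precisely where one must work here, since the $\rho_\lambda$ are not conjugate self-dual---the available potential automorphy theorems (the ten-author paper and its refinements by Lie and by Miagkov--Thorne) require the representation to be \emph{ordinary} at places above $l$, not merely potentially diagonalizable or in the Fontaine--Laffaille range. The BLGGT-style ``potentially diagonalizable'' hypothesis is only known to suffice in the polarizable setting, which does not apply. So the theorem you want to invoke does not, as far as I know, exist.

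The paper's proof accordingly hinges on establishing ordinarity, and this is its main technical point: one shows that for a positive density of rational primes $p$ the representation $\rho_v$ is ordinary for some $v\mid p$, by arguing that non-ordinarity at every $v\mid p$ forces $p\mid a_p$, hence the normalized trace $\tr(\chi^{-1}\otimes\rho_\lambda)(\Frob_p)$ lies in a fixed finite subset of $\Z[i]$, whereas an explicit case analysis of $(\ov\rho_\lambda\otimes\ov\chi^{-1})|_{I_l}$ shows that the image has on the order of $l$ distinct traces, so Chebotarev produces many $p$ outside that finite set. Nothing in your proposal corresponds to this. Your treatment of the reducible case is also too cursory: the two-dimensional piece $s_\lambda$ need not be odd a priori, so one cannot simply appeal to ``known automorphy in dimensions $\le 2$''; the paper instead invokes Calegari's Fontaine--Mazur theorem (which covers the possibly-even case) and proves a separate lemma ensuring that $\ov s_\lambda$ is not of dihedral type for all but finitely many $\lambda$.
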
 
We also record the following corollary:
\begin{cor} The Hasse-Weil L-function of the surface $S_z$ has analytic continuation to the complex plane, and it satisfies a functional equation.
\end{cor}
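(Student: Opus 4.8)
The strategy is to break the Hasse--Weil $L$-function of $S_z$ into motivic pieces and treat each separately, using Theorem~\ref{thm:main} only for the transcendental part. Concretely, I would start from $L(S_z,s)=\prod_{i=0}^{4} L\bigl(H^i(S_{z,\overline{\Q}},\Q_\ell),s\bigr)^{(-1)^i}$, the alternating product over the $\ell$-adic cohomology $G_\Q$-representations of $S_z$; this equals the Hasse--Weil $L$-function and is independent of $\ell$. The factors for $i=0,4$ are $\zeta(s)$ and $\zeta(s-2)$, which are meromorphic on $\C$ with a functional equation, and by the Hodge-number computation of van Geemen--Top the surface $S_z$ has irregularity $q=0$ (it is a resolution of a double cover of a rational surface), so $H^1=H^3=0$ and these factors are trivial. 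Thus the whole problem reduces to $H^2$.

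For $i=2$, the cycle class map gives a $G_\Q$-equivariant orthogonal splitting $H^2(S_{z,\overline{\Q}},\Q_\ell)\cong \mathrm{NS}(S_{z,\overline{\Q}})_{\Q_\ell}(-1)\oplus T_\ell$, where the first summand has finite image (N\'eron--Severi is finitely generated) and $T_\ell$ is the $6$-dimensional transcendental part, which van Geemen--Top identify, after extending scalars to $\Q(i)$, with $\rho_\lambda\oplus\rho_\lambda^c$ for $c$ the nontrivial element of $\Gal(\Q(i)/\Q)$. So $L(H^2,s)$ is a product of Artin $L$-functions (the constituents of the N\'eron--Severi representation, shifted by $1$) times $L(\rho_\lambda,s)\,L(\rho_\lambda^c,s)$. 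The Artin $L$-functions are meromorphic on $\C$ with a functional equation by Brauer's theorem. For the transcendental factor, Theorem~\ref{thm:main} produces a finite Galois extension $F/\Q$ and a cuspidal automorphic representation $\Pi$ of $\GL_3(\A_F)$ with $L(\rho_\lambda|_{G_F},s)=L(\Pi,s)$ up to finitely many Euler factors. Writing Brauer's identity $\mathbf{1}=\sum_j m_j\,\Ind_{G_{F^{H_j}}}^{G_\Q}\mathbf{1}$ with $m_j\in\Z$ and the $H_j\le\Gal(F/\Q)$ solvable (Brauer's theorem even allows nilpotent $H_j$), the inductivity of $L$-functions gives $L(\rho_\lambda,s)=\prod_j L\bigl(\rho_\lambda|_{G_{F^{H_j}}},s\bigr)^{m_j}$, the $j$-th factor being the $L$-function over $F^{H_j}$. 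Since $\rho_\lambda|_{G_F}$ is the restriction of the genuine $G_{F^{H_j}}$-representation $\rho_\lambda|_{G_{F^{H_j}}}$, it is $\Gal(F/F^{H_j})$-invariant, hence so is $\Pi$ (strong multiplicity one), and solvable base-change descent (Arthur--Clozel), possibly after a finite-order twist, produces an isobaric automorphic representation $\Pi_j$ of $\GL_3(\A_{F^{H_j}})$ with $L(\rho_\lambda|_{G_{F^{H_j}}},s)=L(\Pi_j,s)$ up to finitely many Euler factors; such $L$-functions are meromorphic on $\C$ with a functional equation. For $\rho_\lambda^c$ one argues identically: Poincar\'e duality on $T_\ell$ gives $\rho_\lambda^c\cong\rho_\lambda^{\vee}(-2)$, and the contragredient of a potentially automorphic compatible system is again potentially automorphic.

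Assembling the pieces, $L(S_z,s)$ is a finite product of meromorphic functions, hence meromorphic on $\C$, with the only poles coming from the $\zeta$- and Artin factors; pairing $H^i$ with $H^{4-i}$ via Poincar\'e duality $H^i\cong(H^{4-i})^{\vee}(-2)$ and combining the functional equations of the individual pieces gives the functional equation for the completed $L$-function, of the expected shape $s\leftrightarrow 3-s$. The step that is not essentially formal is this descent: one needs the factors $L(\rho_\lambda|_{G_{F^{H_j}}},s)$ to be automorphic over the intermediate fields $F^{H_j}$ rather than merely over $F$, which is exactly why Brauer's theorem must be applied with solvable subgroups and why one invokes cyclic base change. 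Everything else --- the geometric splitting of $H^2$, the vanishing of $H^1$ and $H^3$, and the analytic properties of Artin and of $\GL_3$ automorphic $L$-functions --- is standard bookkeeping.
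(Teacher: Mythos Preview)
The paper gives no proof of this corollary; it is simply recorded after Theorem~\ref{thm:main} as a standard consequence. Your outline is exactly the standard argument one would supply here---split $H^*(S_z)$ into algebraic and transcendental pieces, dispose of the Néron--Severi part via Artin $L$-functions, and handle $T_\ell$ by combining Theorem~\ref{thm:main} with Brauer induction and Arthur--Clozel solvable descent---and it is correct in substance.

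One correction worth making: Brauer's theorem does \emph{not} give $\mathbf{1}_G=\sum_j m_j\,\Ind_{H_j}^G\mathbf{1}$ with $m_j\in\Z$. It gives $\mathbf{1}_G=\sum_j m_j\,\Ind_{H_j}^G\chi_j$ with $m_j\in\Z$, the $H_j$ elementary (hence nilpotent, hence solvable), and the $\chi_j$ one-dimensional characters of $H_j$. The inductivity step then reads
\[
L(\rho_\lambda,s)=\prod_j L\bigl(\rho_\lambda|_{G_{F^{H_j}}}\otimes\chi_j,\,s\bigr)^{m_j},
\]
and one concludes because a finite-order character twist of an automorphic representation is again automorphic. (If you insist on trivial $\chi_j$ you only get $m_j\in\Q$---that is Artin's theorem---which is not enough.) A second, more cosmetic point: strictly speaking $T_\ell\otimes\Q(i)$ carries $\rho_\lambda'\oplus(\rho_\lambda')^{c}$ rather than $\rho_\lambda\oplus\rho_\lambda^{c}$, since $\rho_\lambda=\eps\otimes\rho_\lambda'$; but the Dirichlet twist $\eps$ is harmless for potential automorphy and for the $L$-function, so this does not affect your argument.
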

\subsection{van Geemen-Top family}

Here we give a brief overview of the construction of of $\rho_\lambda$; for details, see \cite{vGT94}.
Choose a positive integer $z \geq 2$, and let $S$ be a surface defined over $\Q$ by the equation 
\[ t^2 = xy(x^2 - 1)(y^2 - 1)(x^2 - y^2 + zxy). \]
Consider the Galois $l$-adic representations on \'etale cohomology:
\[\sigma_l: G_{\Q} \to \GL(H_{\text{\'et}}(S_{\Qbar},\Q_l)).\]
The Galois representation decomposes:
\[H_{\text{\'et}}(S_{\Qbar},\Q_l) = T_l \oplus \text{NS}(S_{\Qbar}) \otimes_\Z \Q_l,\]
where $\text{NS}(S_{\Qbar})$ is the N\'eron-Severi group and $T_l$ its orthogonal complement with
respect to the cup product: $H^2 \times H^2 \to H^4 \cong \Q_l$ . Eigenvalues of Frobenius
on $\text{NS}(S_{\Qbar})$ are roots of unity multiplied by $p$, so we will look for our family of $\lambda$-adic representations inside $T_l \otimes_{\Q_l} \Q(i)_l$. The surface $S$ has an automorphism $\varphi$ defined over $\Q$ given by $(x,y,t) \mapsto (y,-x,t)$. This automorphism satisfies $\varphi^4 = 1$. The induced map: $\varphi_*: H^2 \to H^2$ commutes with the Galois representations. We have $\dim T_l = 6$ and $\varphi_*: T_l \to T_l$ has two three dimensional eigenspaces $W_{\lambda}, W_{\lambda'}$ with eigenvalues $i, -i$ respectively, such that:
\[T_\lambda \coloneqq T_l \otimes_\Q \Q_l(i) = W_\lambda \oplus W_{\lambda'}.\]
This decomposition gives us a compatible family of $\lambda$-adic three dimensional Galois representations $\rho_{\lambda}'$ on $W_\lambda$, a $Q(i)_\lambda$ vector space. In general $\rho_\lambda'$ has to be twisted by a Dirichlet character $\eps$ to obtain $\eps \otimes \rho_\lambda' = \rho_\lambda$ with $\det \rho_\lambda = \chi^3$, where $\chi$ is the cyclotomic character. 
These are the representations $\rho_\lambda : G_\Q \to \GL(W_{\lambda})$ that we will consider. These representations form a weakly compatible system in the sense of \cite[\S 5.1]{BLGGT} which is pure of weight 1 and has Hodge-Tate weights $\{0,1,2\}$. We denote this compatible system by $\mathcal{R}$.

\subsection{Acknowledgments}

The result of this note may very well be known to many experts in the field. One of the main ideas of the proof appeared in Frank Calegari's blog post \cite{CalegariBlog}. We are also greatful to Richard Taylor, Vaughan McDonald and Stepan Kazanin for helpful conversations.

%\section{Local representations}\label{sec:local}
\section{Proof of \cref{thm:main}}
The characteristic polynomial of $\rho_{\lambda}(\Frob_p)$ does not depend on $\lambda$, and we may write it as $Q_p(X) = X^3 - a_pX^2 + \ov{a_p}pX - p^3$. The roots of this polynomial as Weil numbers with absolute value $p$, so we have a bound $\lvert a_p \rvert \leq 3p$.
We recall the following result \cite[Proposition~5.3.2]{BLGGT}:
\begin{prop}\label{big} Suppose that $\mathcal{R}$ is a regular, weakly
    compatible system of $l$-adic representations of $G_F$ defined over
    $M$. 
  If $s$ is a subrepresentation of $r_\lambda$ then we will write $\ov{s}$ for the semi-simplification of the reduction of $s$. Also write $l$ for the rational prime below $\lambda$. Then there is a set of rational primes $\mathcal{L}$ of Dirichlet density $1$ (depending only on $\mathcal{R}$), such that if $s$ is any irreducible subrepresentation of $r_\lambda$ for any $\lambda$ dividing any element of $\mathcal{L}$ then $\ov{s}|_{G_{F(\zeta_l)}}$ is irreducible.
\end{prop}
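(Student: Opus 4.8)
The plan is to reduce the statement, by Clifford theory for the cyclic quotient $\Gal(F(\zeta_l)/F)$, to excluding a ``hidden cyclotomic self-twist'' of the residual representation of bounded order, and then to rule such twists out for almost all $l$ by a Chebotarev argument on the tensor-square compatible system $\mathcal R\otimes\mathcal R^{\vee}$ together with purity. Fix the finite set $S$ of rational primes outside of which every $r_\lambda$ is unramified and the Frobenius characteristic polynomials of $\mathcal R$ and of $\mathcal R\otimes\mathcal R^{\vee}$ at $p$ lie in $M[X]$, are independent of $\lambda$, and have all roots of absolute value $p^{w/2}$, respectively $1$, at every archimedean place of $M$. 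Recall also that $\Gal(F(\zeta_l)/F)$ is cyclic of order $d(l)\mid l-1$, with character group generated by a power of the mod-$l$ cyclotomic character $\overline{\chi}_l$.

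\emph{Step 1 (Clifford reduction).} Let $s\subseteq r_\lambda$ be irreducible and $\overline s$ the semisimplification of its reduction; being semisimple, $\overline s$ stays semisimple on the finite-index subgroup $G_{F(\zeta_l)}$. Since $\Gal(F(\zeta_l)/F)$ is abelian, Frobenius reciprocity and the projection formula give
\[
\dim\End_{G_{F(\zeta_l)}}(\overline s)=\dim\End_{G_F}(\overline s)+\sum_{\eta\neq 1}\dim\Hom_{G_F}(\eta,\ \overline s\otimes\overline s^{\,\vee}),
\]
the sum over nontrivial characters $\eta$ of $\Gal(F(\zeta_l)/F)$. So $\overline s|_{G_{F(\zeta_l)}}$ is reducible precisely when either $\overline s$ is already $G_F$-reducible, or $\overline s$ is $G_F$-irreducible and $\overline s\otimes\eta\cong\overline s$ for some nontrivial $\eta$; in the latter case comparing determinants forces $\eta^{\dim s}=1$, so $\eta$, and hence the whole cyclic group it generates, has order $e\in\{2,\dots,n\}$. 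In particular $\overline s\otimes\overline s^{\,\vee}$, and therefore the reduction mod $\lambda$ of $\mathcal R\otimes\mathcal R^{\vee}$, then contains a character $\overline{\chi}_l^{\,a}$ with $a$ a nonzero multiple of $(l-1)/e$ for some bounded $e$.

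\emph{Step 2 (characteristic zero, and the Chebotarev estimate).} The same identity in characteristic zero shows that $s|_{G_{F(\zeta_l)}}$ is irreducible once $l$ is large: a self-twist $s\otimes\eta\cong s$ forces $\eta$ to have order dividing $n$ and to be unramified outside $S$, hence to lie in the finite group $\Hom(\Gal(F^{S}/F),\mu_n)$ (depending only on $F$, $S$, $n$, where $F^S$ is the maximal extension of $F$ unramified outside $S$), and once $l$ exceeds the orders and discriminants attached to that group no nontrivial such $\eta$ factors through $\Gal(F(\zeta_l)/F)$. Thus only reduction modulo $\lambda$ can spoil the conclusion. Suppose then that the bad case of Step 1 holds at some $\lambda\mid l$ with self-twist $\overline{\chi}_l^{\,a}$ of order $e$. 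For every $p\notin S$ with $p\neq l$, the value $p^{a}\bmod\lambda$ lies in $\mu_e(\overline{\F}_l)$ and is a root of the reduction of $P_p(X):=\operatorname{charpoly}\big((r_\lambda\otimes r_\lambda^{\vee})(\Frob_p)\big)\in M[X]$. Realizing $\mu_e(\overline{\F}_l)$ as the reduction of $\mu_e(\overline{\Z})$, this says $\lambda$ divides $P_p(\zeta)$ for some $e$-th root of unity $\zeta$, hence divides the $\lambda$-independent number $N_{p,e}:=\prod_{\zeta\in\mu_e}P_p(\zeta)\in M$. By purity the roots of $P_p$ have absolute value $1$, so after clearing a bounded power of $p$, $N_{p,e}$ becomes an algebraic integer whose archimedean absolute values are bounded independently of $l$. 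If $N_{p,e}\neq 0$ for some admissible $p$, then only finitely many $l$ can occur for that $p$ and that $e$; intersecting over $e\in\{2,\dots,n\}$ and adjoining the large-$l$ bound above gives a set $\mathcal L$ of density one.

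\emph{Expected main obstacle.} The crux is the non-vanishing of $N_{p,e}$, together with the parallel issue for the ``already $G_F$-reducible'' sub-case of Step 1. Indeed $\mathcal R\otimes\mathcal R^{\vee}$ always contains the trivial representation, so $1\in\mu_e$ is always a root of $P_p$ and $N_{p,e}$ vanishes identically unless one first passes to the trace-zero part; worse, even there a ratio of Frobenius eigenvalues of $\mathcal R$ can be a root of unity for \emph{every} $p$ — the ``CM/Artin-type'' situation, in which some constituent is an Artin representation Tate-twisted into the ambient weight, or is induced from a character all of whose conjugate Frobenius values differ by roots of unity. Such constituents must be separated off and treated directly: for them the underlying finite-image field is fixed and disjoint from $F(\zeta_l)$ for $l$ large, so the Brauer--Nesbitt theorem gives residual irreducibility on $G_{F(\zeta_l)}$ at once (this also disposes of the $G_F$-reducible sub-case in that regime). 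For the remaining constituents the Zariski closure of the image is large enough that a positive density of Frobenii are regular semisimple with ``generic'' eigenvalues, so some $p_0\notin S$ makes $N_{p_0,e}\neq 0$ and, likewise, $\overline{r_\lambda}$ inherits irreducibility from the large residual image. The real work — and where I expect to spend the most effort — is making this ``large image versus small image'' dichotomy, and the accompanying choice of $p_0$, uniform over $\lambda$; this is where one must invoke the $\lambda$-independence of the ``type'' of $r_\lambda$ supplied by the weak compatibility of $\mathcal R$.
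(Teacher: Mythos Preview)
The paper does not prove this proposition: it is quoted as \cite[Proposition~5.3.2]{BLGGT} and used as a black box, so there is no in-paper argument to compare against. The proof in BLGGT does not proceed along the Chebotarev/purity lines you sketch; its essential input is a theorem of Larsen on the images of compatible systems, which furnishes, for a Dirichlet-density-$1$ set of rational primes $l$, a uniform group-theoretic description of the image of $\overline r_\lambda$ in terms of the algebraic monodromy group of $\mathcal R$. With that description in hand, both the $G_F$-irreducibility of each $\overline s$ and the absence of a nontrivial cyclotomic self-twist follow at once.

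Your Clifford reduction in Step~1 is correct, and detecting a residual self-twist of order $e\le n$ through an $e$-th root of unity in the Frobenius spectrum of $\mathcal R\otimes\mathcal R^{\vee}$ is a good idea. But the gap you yourself flag is genuine and not easily closed by the tools you list. Two points. First, Brauer--Nesbitt only says semisimple representations are determined by their traces; for finite-image (Artin-type) $s$ what you actually want is that reduction mod $l$ for $l\nmid|\im s|$ is an equivalence of categories, which is fine, but this says nothing about the non-Artin constituents. Second, and more seriously, $s$ is an irreducible constituent of a single $r_\lambda$ and is not \emph{a priori} a member of any compatible system, so you cannot run your divisibility argument on ``$s\otimes s^{\vee}$'' across $\lambda$; you are forced to work with all of $\mathcal R\otimes\mathcal R^{\vee}$, whose trivial part already kills your $N_{p,e}$, and stripping it off uniformly in $\lambda$ presupposes that the irreducible decomposition of $r_\lambda$ is itself $\lambda$-independent. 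Your proposed fix in the non-Artin regime---``$\overline{r}_\lambda$ inherits irreducibility from the large residual image''---is exactly the statement to be proved, and making your ``large versus small image'' dichotomy uniform in $\lambda$ is precisely the content of Larsen's theorem. In short, your outline reduces the proposition to the same deep input BLGGT invokes; it does not give an alternative route around it.
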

From this point on, we restrict our attention to the places in $\mathcal{L}$.
\subsection{Irreducible case}
In this section, we assume that $\rho_\lambda$ is absolutely irreducible at all places $\lambda \in \mathcal{L}$. Our main tool is the following potential automorphy theorem:
\begin{thm}
\label{thm:pot_aut}
    Suppose $F$  is a CM number field, $F^{\emph{av}}$  is a finite extension of $F$ and $n\geq 2$  is a positive integer. Let $p$  be a prime number. Suppose that
    \[
    r: G_F\rightarrow \GL_n(\overline{\Q}_l)
    \]
    is a continuous representation satisfying the following condition:
    \begin{enumerate}
        \item $r$ is unramified almost everywhere.
        \item For each place $v\mid l$ of $F$,  the representation $r|_{G_{F_{v}}}$  is potentially semistable, ordinary with regular Hodge-Tate weights.
        \item $\ov{r}$  is absolutely irreducible and decomposed generic (cf. \cite[Definition~4.3.1]{10author}). The image of $\ov{r}|_{G_{F(\zeta_{l})}}$  is adequate (cf. \cite[Definition~1.1]{MT23}).
        \item There exists $\sigma\in G_{F}-G_{F(\zeta_{l})}$  such that  $\ov{r}(\sigma)$ is a scalar.
    \end{enumerate}
        
Then there exists a finite CM Galois extension $F'/F$  linearly disjoint from $F^{\emph{av}}$  over $F$ such that $r|_{G_{F'}}$  is ordinarily automorphic.
\end{thm}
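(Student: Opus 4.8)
The plan is to assemble the statement from two now-standard black boxes — a potential automorphy theorem over CM fields and an ordinary automorphy lifting theorem — glued in the style of Moret--Bailly and Khare--Wintenberger; essentially all the analytic and geometric work is done in \cite{10author} and \cite{MT23}, so the task is to check that their hypotheses hold and survive the base changes we make. Throughout, write $l$ for the residue characteristic (the prime called $p$ in the statement). First I would produce, by the potential automorphy machinery of \cite{10author}, a finite CM Galois extension $F_1/F$ linearly disjoint over $F$ from the compositum of $F^{\mathrm{av}}$ with the field cut out by $\ov{r}$ and $\zeta_l$, together with a regular algebraic cuspidal automorphic representation $\pi$ of $\GL_n(\A_{F_1})$ which is $\iota$-ordinary of the same Hodge--Tate weights as $r$ at the places above $l$ and satisfies $\ov{r}|_{G_{F_1}}\cong \ov{r_{l,\iota}(\pi)}$. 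Concretely this comes from a Moret--Bailly argument on a moduli space of suitable motives (a piece of the cohomology of a Dwork-type family) producing a CM point over some $F_1$ where the mod-$l$ representation agrees with $\ov{r}$ and the companion characteristic-zero representation is automorphic — the latter being known by induction on $n$, ultimately via base change from CM forms. Hypotheses (3) and (4) on $\ov{r}$ are exactly what is needed to run this step and to pin down the field of definition, and one imposes level structure to force the disjointness from $F^{\mathrm{av}}$.

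Next I would apply an ordinary automorphy lifting theorem over CM fields (the one in \cite{10author}, whose image condition is adequacy in the sense of \cite{MT23}, which is why the theorem carries hypothesis (3)) to $r|_{G_{F_1}}$. Its hypotheses are: $r$ unramified almost everywhere — from (1); $r|_{G_{F_{1,w}}}$ ordinary, potentially semistable with regular Hodge--Tate weights for all $w\mid l$ — from (2), these being preserved under restriction; $\ov{r}|_{G_{F_1}}$ automorphic via $\pi$ — from the first step; and $\ov{r}|_{G_{F_1(\zeta_l)}}$ adequate — which holds because $F_1$ was chosen disjoint from the field cut out by $\ov{r}$ and $\zeta_l$, so that $\ov{r}(G_{F_1(\zeta_l)})=\ov{r}(G_{F(\zeta_l)})$ and adequacy is inherited from (3). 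This yields that $r|_{G_{F_1}}$ is ordinarily automorphic; taking $F'=F_1$ (already CM, Galois over $F$, and linearly disjoint from $F^{\mathrm{av}}$) completes the proof.

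The heavy lifting lives entirely in the two cited theorems, so the genuine difficulty is bookkeeping: arranging that the potential automorphy output of \cite{10author} carries the \emph{ordinarity} refinement at $l$ and the prescribed linear disjointness from $F^{\mathrm{av}}$, and checking that every largeness-of-image input (adequacy, decomposed genericity, and the existence of $\sigma\in G_F\setminus G_{F(\zeta_l)}$ with $\ov{r}(\sigma)$ scalar) is genuinely preserved on passing to $G_{F_1}$. The one point that requires care is that $F_1(\zeta_l)/F_1$ have the same Galois group as $F(\zeta_l)/F$ and that $\ov{r}$ have the same image on $G_{F_1}$ as on $G_F$ — both guaranteed by choosing $F_1$ linearly disjoint over $F$ from the field cut out by $\ov{r}$ together with $\zeta_l$ — and that the local conditions at $l$ matching $r$ to $\pi$ be the ordinary ones throughout.
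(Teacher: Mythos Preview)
Your sketch is correct and follows the same underlying strategy as the paper: realise $\ov{r}$ as automorphic over some CM extension by a Moret--Bailly/Dwork-family argument, then apply an ordinary automorphy lifting theorem with the \emph{adequate} image hypothesis to promote this to automorphy of $r$ itself. The bookkeeping you flag (preserving adequacy, decomposed genericity, and the scalar element under passage to $F_1$, and arranging linear disjointness from $F^{\mathrm{av}}$) is exactly the right list of things to check.

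The only difference is one of packaging. The paper does not reprove anything: it observes that the statement is precisely \cite[Theorem~1.4]{Lie} with ``enormous'' weakened to ``adequate'', and that this weakening goes through because the sole place the image condition is used in \cite{Lie} is the automorphy lifting input, where one may substitute \cite[Theorem~1.3]{MT23} for \cite[Theorem~6.1.2]{10author}. You instead unwind what \cite{Lie} does and rebuild the argument from \cite{10author} and \cite{MT23}; this is more informative but also more work, and it leaves you responsible for the ordinarity refinement of the potential automorphy step (ensuring $\pi$ can be taken $\iota$-ordinary with the correct weights), which is handled in \cite{Lie} rather than directly in \cite{10author}. One small wording slip: the lifting theorem in \cite{10author} itself carries the \emph{enormous} hypothesis, not the adequate one --- the whole point is to swap it for the \cite{MT23} version.
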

\begin{proof}
    This is almost \cite[Theorem~1.4]{Lie}, except the ``enormous'' condition on the image is replaced with ``adequate''. This is achieved by using \cite[Theorem~1.3]{MT23} instead of \cite[Theorem~6.1.2]{10author} as the autormophy lifting input to the arguments of \cite{Lie}.
\end{proof}

\begin{lem}\label{lem:residual_good}
    Let $\lambda \in \mathcal{L}$ lie above a rational prime $l$. Then for all but finitely many $\lambda$
    \begin{enumerate}
        \item $\ov{\rho}_\lambda|_{G_{\Q(\zeta_{l})}}$ is adequate.
        \item There exists $\sigma\in G_{\Q}-G_{\Q(\zeta_{l})}$  such that  $\ov{\rho}_\lambda(\sigma)$ is a scalar.
    \end{enumerate}
\end{lem}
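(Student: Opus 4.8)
The plan is to establish the two conditions by relating the residual representations $\ov{\rho}_\lambda$ to the mod-$l$ reductions of a motivically-defined compatible system, and then to invoke large-image results that hold for all but finitely many $\lambda$. For part (1), I would first recall from \cref{big} that for $\lambda \in \mathcal{L}$ the restriction $\ov{\rho}_\lambda|_{G_{\Q(\zeta_l)}}$ is (absolutely) irreducible, so its image is an irreducible subgroup of $\GL_3(\overline{\F}_l)$. The key step is then to pin down this image for large $l$: since $\det\rho_\lambda = \chi^3$ and the system is pure of weight $1$ with Hodge--Tate weights $\{0,1,2\}$, the image is large in the sense that (after passing to a finite-index subgroup) it contains $\SL_3$ or is conjugate into a proper algebraic subgroup. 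One rules out the small-image possibilities — reducible (excluded by \cref{big}), imprimitive/monomial, and contained in a subgroup preserving a symmetric or alternating form or arising from $\PGL_2$ via the symmetric square — using that $\rho_\lambda$ is not self-dual (this is precisely the non self-duality emphasized in the introduction, and it persists mod $l$ for all but finitely many $\lambda$ because the absence of an invariant pairing is an open condition along the compatible system). Once the image of $\ov{\rho}_\lambda|_{G_{\Q(\zeta_l)}}$ is shown to contain $\SL_3(\F_l)$ for all but finitely many $\lambda$, adequacy in the sense of \cite[Definition~1.1]{MT23} follows from the known classification of adequate subgroups: $\SL_3(\F_l)$ (and groups between it and $\GL_3(\F_l)$) is adequate once $l$ is larger than an explicit bound depending only on $n=3$, so we simply discard the finitely many small primes.

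For part (2), I would exhibit the desired scalar as the image of complex conjugation or of an element of $\Gal(\Q(\zeta_l)/\Q)$-type. More precisely, $G_{\Q}/G_{\Q(\zeta_l)} \cong (\Z/l\Z)^{\times}$ via the cyclotomic character, so I need $\sigma$ with $\chi(\sigma) \neq 1$ and $\ov{\rho}_\lambda(\sigma)$ scalar. Because $\det\ov{\rho}_\lambda = \ov{\chi}^3$, any scalar value $aI$ forces $a^3 = \chi(\sigma)^3$; the natural candidate is to look for $\sigma$ acting as a scalar via the cyclotomic character itself — i.e.\ to use that the compatible system, being a piece of the cohomology of a surface with an order-$4$ automorphism defined over $\Q(i)$, has its reduction containing homotheties coming from the Galois action on roots of unity through the determinant twist. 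Concretely: the finite-image quotient through which the "scalar part" of $\ov{\rho}_\lambda$ factors is cyclotomic, so for $l$ large enough that the image is as large as possible, the center of the image is exactly the scalars $\{\chi(g)I\}$ and there is $\sigma$ with $\chi(\sigma)$ a nontrivial cube root of unity mod $l$ (possible whenever $l \equiv 1 \bmod 3$) or, more robustly, one uses that $\overline{\rho}_\lambda$ restricted to a suitable cyclotomic inertia subgroup contains a nontrivial scalar since the Hodge--Tate weights $\{0,1,2\}$ are an arithmetic progression; this guarantees $\sigma \notin G_{\Q(\zeta_l)}$.

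The main obstacle I anticipate is part (1) — specifically, controlling the image of $\ov{\rho}_\lambda$ uniformly in $\lambda$ and excluding all the "exceptional" subgroup types for $\GL_3$. The reducible case is handled by \cref{big}, but imprimitivity and the possibility of the image lying in the normalizer of a torus, or in the image of $\Sp_2$/$\SO_3$, must be excluded using non self-duality together with a big-monodromy input for the $l$-adic (not just residual) system; standard arguments (e.g.\ of the type in \cite{BLGGT} combined with the fact that a self-dual reduction for infinitely many $\lambda$ would force self-duality of the whole system) reduce this to finitely many bad $\lambda$. Once $\SL_3 \subseteq \ov{\rho}_\lambda(G_{\Q(\zeta_l)})$ is in hand, both adequacy and the scalar condition are essentially bookkeeping with the classification in \cite{MT23} and the cyclotomic quotient, so the real content is the uniform large-image statement, which I would prove by combining purity/Hodge--Tate data with the non self-duality of $\mathcal{R}$.
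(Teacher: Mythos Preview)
Your overall strategy---control the residual image and read off (1) and (2)---is reasonable, but both parts of the execution have genuine gaps, and the paper proceeds rather differently.

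\textbf{Part (1).} The paper does not attempt to prove that the image contains $\SL_3(\F_l)$. It simply invokes a general theorem (\cite[Theorem~9]{adequate}) to the effect that an absolutely irreducible subgroup of $\GL_n(\ov{\F}_l)$ is adequate once $l$ is large relative to $n$; since $\lambda\in\mathcal{L}$ already gives absolute irreducibility of $\ov{\rho}_\lambda|_{G_{\Q(\zeta_l)}}$, adequacy is immediate for $l\gg 0$. Your route through ``image contains $\SL_3$'' is both unnecessary and gappy: non self-duality does \emph{not} rule out the imprimitive case. A monomial representation $\Ind_H^G\psi$ with $[G:H]=3$ has dual $\Ind_H^G\psi^{-1}$, and there is no reason for $\psi^{-1}$ to lie in the $G/H$-orbit of $\psi$. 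Indeed, the paper's own classification (taken from \cite[\S 6]{DN}) explicitly lists, among the possibilities for the projective image, the normalizer of a split torus and the normalizer of a nonsplit Cartan---precisely the cases you claim to exclude---and makes no attempt to eliminate them.

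\textbf{Part (2).} Here the paper's argument is structurally different from anything you propose. The key reduction is: if the projective image $G_\lambda\coloneqq(\ad\ov{\rho}_\lambda)(G_\Q)\subset\PGL_3(\ov{\F}_l)$ admits no surjection onto $(\Z/l\Z)^\times$, then $\ker(\ad\ov{\rho}_\lambda)\not\subset G_{\Q(\zeta_l)}$, and any $\sigma$ in the difference does the job. The paper then runs through the four possible shapes of $G_\lambda$ supplied by \cite[\S 6]{DN}---$\PSL_3(\F_l)$, $\PSU_3(\F_l)$, an extension of $C_3$ by a cyclic group of order dividing $l^2+l+1$, or an extension of $S_3$ (or $C_3$) by a diagonal group of order dividing $(l-1)^2$---and checks that none of these surject onto the cyclic group of order $l-1$ once $l>7$. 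Your suggestions do not yield a proof: complex conjugation has determinant $-1$ but is not scalar (its eigenvalues are $\pm 1$ with both signs occurring, as one sees from the Hodge decomposition of $H^2$); and the ``arithmetic progression of Hodge--Tate weights'' idea fails because, e.g.\ in the ordinary case, inertia acts through an upper-triangular matrix with distinct powers of the fundamental character on the diagonal, so the only scalar in the inertial image is the identity, which lies in $G_{\Q(\zeta_l)}$. The crucial input you are missing is the explicit list of possible projective images for this particular compatible system.
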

\begin{proof}
    (1) follows from \cite[Theorem~9]{adequate}. For (2), note that $\Gal(\Q(\zeta_l)/\Q)\cong (\Z/l\Z)^\times$, and so it would be enough to show that $(\ad \ov{\rho}_\lambda)(G_{\Q})$ does not surject onto $(\Z/l\Z)^\times$. From \iffalse \cite[Theorem~47]{pgl3}  \cite[\S 15]{Mitchell} \fi \cite[\S 6]{DN} we have the following possibities for $G_\lambda \coloneqq (\ad \ov{\rho}_\lambda)(G_{\Q}) \subset \PGL_3(\ov{\F}_l)$ for all but finitely many $\lambda$:
    \begin{enumerate}
        \item $\PSL_3(\F_l)$ if $l \equiv 1 \pmod 4$.
        \item $\PSU_3(\F_l)$ if $l \equiv 3 \pmod 4$
        \item $G_\lambda$ is contained in the normalizer of a cyclic group, and fits into an exact sequence \[1 \to \mathcal{C} \to G_\lambda \to C_3 \to 1,\] where $\mathcal{C}$ is cyclic of order dividing $l^2+l+1$.
        \item $G_\lambda$ is contained in the normalizer of a diagonal subgroup, and fits into an exact sequence \[1 \to \mathcal{R} \to G_\lambda \to \mathcal{U} \to 1,\] where $\mathcal{R}$ is diagonal of order dividing $(l-1)^2$, and $\mathcal{U}$ is either $S_3$ or a subgroup generated by a 3-cycle in $S_3$. Here the action of $\mathcal{U}$ on $\mathcal{R}$ is the restriction of the usual action of the Weyl group on the maximal torus. 
    \end{enumerate}
    None of these groups surject to $(\Z/l\Z)^\times$ for $l > 7$, hence we are done.
\end{proof}

For any positive number $r$ let $C_r \subset \Z[i]$ denote the subset of Gaussian integers with absolute value $\leq r$. For any complex number $x$ let $xC_r \coloneqq \{xy \mid y \in C_r\}$.
\begin{prop}\label{prop:irreducible}
    In the irreducible case the compatible system $\mathcal{R}$ is potentially automorphic.
\end{prop}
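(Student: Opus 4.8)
The strategy is to reduce to a single prime and then invoke \cref{thm:pot_aut} over $\Q(i)$. By the standard propagation of automorphy inside a weakly compatible system (cf.\ \cite{BLGGT}), it suffices to produce a single rational prime $\ell$, a place $\lambda\mid\ell$ of $\Q(i)$, and a finite CM extension $F'/\Q(i)$ for which $\rho_\lambda|_{G_{F'}}$ is automorphic: the compatible system attached to the resulting cuspidal automorphic representation then agrees with $\mathcal R|_{G_{F'}}$ at all but finitely many Frobenius elements, and since each $\rho_{\lambda'}|_{G_{F'}}$ is semisimple (we are in the irreducible case), a Chebotarev argument shows that every member of $\mathcal R$ becomes automorphic over $F'$. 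To obtain $\ell$, $\lambda$ and $F'$ we apply \cref{thm:pot_aut} with $F=\Q(i)$ (a CM field), $n=3$, $F^{\mathrm{av}}$ arbitrary, and input representation $r=\rho_\lambda|_{G_{\Q(i)}}$, regarded inside $\GL_3(\overline{\Q}_\ell)$ via an embedding $\Q(i)_\lambda\hookrightarrow\overline{\Q}_\ell$. So the task is to verify hypotheses (1)--(4) of \cref{thm:pot_aut} for a suitable $\ell$.

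Hypotheses (1), (3) and (4) will hold for every $\lambda\in\mathcal L$ above a sufficiently large prime $\ell$. Indeed, $\rho_\lambda$ is unramified away from $\ell$ and the bad primes of $S_z$, which gives (1). For (3): $\ov{\rho}_\lambda|_{G_{\Q(\zeta_\ell)}}$ is absolutely irreducible by \cref{big} and has adequate image by \cref{lem:residual_good}(1); since $\Q(i)(\zeta_\ell)=\Q(\zeta_{4\ell})$ is a degree-$2$ extension of $\Q(\zeta_\ell)$, in the two large-image cases of \cref{lem:residual_good} restriction to this index-$2$ subgroup preserves both absolute irreducibility and adequacy, so $\ov r$ is absolutely irreducible with adequate image over $G_{\Q(i)(\zeta_\ell)}$; in the remaining two cases $\ov{\rho}_\lambda$ --- hence, by a standard compatible-systems argument, $\rho_\lambda$ and therefore $\mathcal R$ --- is induced from characters, so $\mathcal R$ is automorphic by class field theory, and we may thus assume the projective image of $\ov{\rho}_\lambda$ contains $\PSL_3(\F_\ell)$ or $\PSU_3(\F_\ell)$. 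With this large residual image, decomposed genericity of $\ov r$ holds for a density-one set of $\ell$ by the usual Chebotarev argument (cf.\ \cite{10author}). For (4): by \cref{lem:residual_good}(2) none of the possible groups $\ad\ov{\rho}_\lambda(G_\Q)$ --- and hence neither of the index-$\le 2$ subgroups $\ad\ov{\rho}_\lambda(G_{\Q(i)})$ --- surjects onto $(\Z/\ell\Z)^\times$ for $\ell>7$, so the subgroup $\{g\in G_{\Q(i)}:\ov{\rho}_\lambda(g)\text{ is scalar}\}$ has nontrivial image in $\Gal(\Q(\zeta_{4\ell})/\Q(i))\cong(\Z/\ell\Z)^\times$, which produces the required $\sigma\in G_{\Q(i)}-G_{\Q(i)(\zeta_\ell)}$. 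In hypothesis (2), $r|_{G_{F_v}}$ is crystalline --- hence potentially semistable --- at every $v\mid\ell$ whenever $\ell$ is a prime of good reduction, with the regular Hodge--Tate weights $\{0,1,2\}$; the one condition that is not automatic, and the crux of the argument, is that $r$ be \emph{ordinary} at the places above $\ell$.

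The main obstacle is therefore to show that $\rho_\lambda|_{G_{\Q_\ell}}$ is ordinary for infinitely many $\ell$, so that the ordinary primes meet the density-one set just described. We argue by contradiction, restricting to primes $\ell$ that are inert in $\Q(i)$ --- a set of density $1/2$ --- and of good reduction. For such $\ell$, $\rho_\lambda|_{G_{\Q_\ell}}$ is crystalline with crystalline-Frobenius characteristic polynomial $Q_\ell(X)=X^3-a_\ell X^2+\ov{a_\ell}\ell X-\ell^3$, and it is ordinary precisely when the Newton polygon of $Q_\ell$ equals the Hodge polygon, whose slopes are $\{0,1,2\}$. Since $\ell$ is inert, $\Q(i)_\lambda$ is unramified over $\Q_\ell$ and complex conjugation acts on it preserving the valuation, so $v_\lambda(a_\ell)=v_\lambda(\ov{a_\ell})$; going through the admissible slope multisets of a cubic over $\Q(i)_\lambda$ then shows that if $\rho_\lambda|_{G_{\Q_\ell}}$ is not ordinary the slope multiset must be $\{1,1,1\}$ or $\{1/2,1/2,2\}$ --- the remaining possibility $\{0,3/2,3/2\}$ being excluded, since it would force $v_\lambda(a_\ell)=0$ while, reading off the $X$-coefficient $\ov{a_\ell}\ell$ of $Q_\ell$, also $v_\lambda(\ov{a_\ell})\ge 1$ --- and in either remaining case $\ell\mid a_\ell$ in $\Z[i]$. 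By the bound $\lvert a_\ell\rvert\le 3\ell$ this means $a_\ell\in\ell C_3$. Now, if only finitely many inert primes were ordinary then $a_\ell\in\ell C_3$ for all but finitely many inert $\ell$; as $C_3$ is finite, by pigeonhole some fixed $c_0\in C_3$ satisfies $a_\ell=c_0\ell$ on a set $T$ of primes of positive upper density. Fixing an auxiliary prime $p_0$ outside the bad set and a place $\lambda_0\mid p_0$, for every $\ell\in T$ the characteristic polynomial of $\ov{\rho}_{\lambda_0}(\Frob_\ell)$ is $X^3-c_0\ell X^2+\ov{c_0}\ell^2X-\ell^3\bmod p_0$, which depends only on $\ell\bmod p_0$; since $\ov{\rho}_{\lambda_0}$ has large image, Chebotarev bounds the upper density of such $\ell$ by $O(p_0^{-2})$, contradicting the positivity of the upper density of $T$ once $p_0$ is large. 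Hence infinitely many inert $\ell$ are ordinary; choosing such an $\ell$ in the density-one set of the previous paragraph, \cref{thm:pot_aut} applies, and $\mathcal R$ is potentially automorphic.
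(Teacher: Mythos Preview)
Your argument follows the same template as the paper's: verify the hypotheses of \cref{thm:pot_aut}, with ordinarity being the only nontrivial point. Two issues deserve comment.

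\textbf{(i)} The detour in verifying condition (3) --- claiming that in the two small-image cases of the projective-image list $\rho_\lambda$ (not merely $\ov{\rho}_\lambda$) is induced from characters ``by a standard compatible-systems argument'' --- is not justified: residual monomiality at one prime does not force $\lambda$-adic monomiality. It is also unnecessary. Since $[\Q(i)(\zeta_\ell):\Q(\zeta_\ell)]=2$ and $\dim\ov\rho_\lambda=3$ is odd, Clifford theory forces $\ov\rho_\lambda|_{G_{\Q(i)(\zeta_\ell)}}$ to remain absolutely irreducible whenever $\ov\rho_\lambda|_{G_{\Q(\zeta_\ell)}}$ is (which holds for $\lambda\in\mathcal L$), and then adequacy follows from the same result of \cite{adequate}, uniformly in all four cases. (The paper itself glosses over the passage from $\Q$ to $\Q(i)$.)

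\textbf{(ii)} The final step needs the ordinary primes to have \emph{positive density}, not merely to be infinite, in order to meet the density-one set from the previous paragraph; an infinite set can sit entirely inside a density-zero set. Your contradiction argument actually proves this stronger statement: the non-ordinary inert primes lie in $\bigcup_{c\in C_3}\{\ell:a_\ell=c\ell\}$, and you show each piece has upper density $O(p_0^{-2})$ for every auxiliary $p_0$, hence density $0$. So the ordinary inert primes have density $1/2$. State it that way.

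The ordinarity arguments differ in flavor but not in substance. The paper fixes a large auxiliary $l$, twists by $\chi^{-1}$, and observes that the explicit Fontaine--Laffaille shapes of $(\ov\rho_\lambda\otimes\ov\chi^{-1})|_{I_l}$ already exhibit $\sim l$ distinct traces --- more than $|C_3|$ --- so Chebotarev yields positive density of $p$ with $a_p/p\notin C_3$. Your route (pigeonhole on $c_0$, then a global large-image count at $p_0$) reaches the same endpoint; indeed, after twisting by $\chi^{-1}$ your condition becomes ``$(\ov\rho_{\lambda_0}\otimes\ov\chi^{-1})(\Frob_\ell)$ has a fixed characteristic polynomial'', which is the paper's trace condition strengthened. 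Note that your bound $O(p_0^{-2})$ invokes large image at $p_0$, so once you drop the unneeded case split in (i) you should either verify the bound in the small-image cases as well (it still holds: in those cases each characteristic polynomial is realized by boundedly many elements of a group of order $\gg p_0^2$), or follow the paper and use the inertia image, which is insensitive to the global classification.
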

\begin{proof}
    We begin by showing that there exists a positive density set of places $\mathcal{P}$ of $\Q(i)$ such that $\rho_v$ is ordinary for all $v \in \mathcal{P}$. Let $p$ be a prime split in $\Q(i)$, and let $v_1, v_2$ be the places of $\Q(i)$ above $p$. Suppose that $\rho_{v_1}$ and $\rho_{v_2}$ are both not ordinary. Then $p \mid a_p$, which implies that $a_p \in pC_3$. Similarly $a_p \in pC_3$ if $p$ is inert in $\Q(i)$ and $\rho_p$ is not ordinary. Thus if $\rho$ is not ordinary at at any place above $p$, then for any $\lambda \mid l$ we have $\tr (\chi^{-1} \otimes \rho_\lambda)(\Frob_p) \in C_3$. From \cite[\S 5]{DN} we can list the following possibilities for $({\ov{\rho}_\lambda} \otimes \ov{\chi}^{-1})|_{I_l}$:

    \[\begin{pmatrix} \psi_1^{-1} & * & * \\ 0 & 1 & * \\ 0 & 0 & \psi_1 \end{pmatrix}, \quad 
    \begin{pmatrix}\psi_1^{-1} & * & * \\ 0 & \psi_2 & 0 \\ 0 & 0 & \psi_2^l \end{pmatrix}, \quad
    \begin{pmatrix}1 & * & * \\ 0 & \psi_2^{1-l} & 0 \\ 0 & 0 & \psi_2^{l-1} \end{pmatrix},\]

    \[\begin{pmatrix} \psi_1 & * & * \\ 0 & \psi_2^{-l} & 0 \\ 0 & 0 & \psi_2^{-1} \end{pmatrix}, \quad 
    \begin{pmatrix}\psi_3^{l-l^2} & 0 & 0 \\ 0 & \psi_3^{l^2-1} & 0 \\ 0 & 0 & \psi_3^{1-l} \end{pmatrix}.\]
    Here $\psi_n$ is the fundamental character of level $n$. It is easy to see that each of these images contains $O(l)$ different traces. Thus for large enough $l$ there exists an element $\sigma \in G_{\Q}$ such that $\tr (\chi^{-1} \otimes \rho_\lambda)(\sigma) \notin C_3$. By the Chebotarev density theorem we thus have a positive density set of rational primes $\mathcal{P}$ such that $\rho_{v}$ is ordinary for at least one $v \mid p$. Combined with \cref{lem:residual_good} we see that we can find a place $v$ such that $\rho_v$ satisfies all conditions of \cref{thm:pot_aut}, and we are done.
\end{proof}

\subsection{Reducible case}
Now we assume that $\rho_\lambda$ is reducible for some place $\lambda$. If for any place $\lambda$ we have $\rho_\lambda \cong \chi_1 \oplus \chi_2 \oplus \chi_3$, where $\chi_i : G_\Q \to \ov{\Q}_l^\times$ are characters, then $\rho_\lambda$ is automorphic by class field theory. Thus we can assume $\rho_\lambda = \chi_\lambda \oplus s_\lambda$, where $\dim \chi_\lambda = 1$ and $\dim s_\lambda = 2$. The character $\chi_\lambda$ fits into a compatible system $\mathcal{X}$, and thus we have a splitting $\mathcal{R} = \mathcal{X} \oplus \mathcal{S}$ for a 2-dimensional compatible system $\mathcal{S}$. Let $b_p \coloneqq \tr s_\lambda(\Frob_p)$. Since $\rho_\lambda$ is pure of weight 1 and has Hodge-Tate weights $\{0, 1, 2\}$, it follows that $\chi_\lambda = \omega_l^{-1} \otimes \chi_f$, where $\omega_l$ is the $l$-adic cyclotomic character, and $\chi_f$ is a finite character. Then $s_\lambda$ has Hodge-Tate weights $
\{0, 2\}$. We now recall the following theorem of Calegari, \cite[Theorem~1.1]{CalegariFM}:
\begin{thm}\label{thm:fm}
    $\rho: G_{\Q} \rightarrow \GL_2(\ov{\Q}_l)$ be a continuous
    Galois representation which is unramified except at a finite number of primes. Suppose
    that $l > 7$, and, furthermore, that
    \begin{enumerate}
    \item $\rho|_{G_{\Q_l}}$ is potentially semi-stable, with distinct Hodge--Tate weights.
    \item The residual representation $\ov{\rho}$ is absolutely irreducible and not of dihedral type.
    \item $\ov{\rho}|_{G_{\Q_l}}$ is not a twist of a representation of the form
    $\displaystyle{ \left( \begin{matrix}\omega & * \\ 0 & 1  \end{matrix} \right)}$ where
    $\omega$ is the mod-$l$ cyclotomic character.
    \end{enumerate}
    Then $\rho$ is modular.
\end{thm}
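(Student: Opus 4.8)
The statement is \cite[Theorem~1.1]{CalegariFM}, so for us it is an input rather than a theorem to prove; but since the point of quoting it in this form --- rather than a $\GL_2$ potential automorphy theorem over a totally real field --- is that it applies to $\rho$ of either parity, it is worth recording the shape of Calegari's argument and where the real difficulty lies. The plan is to split into cases according to the parity of $\rho$.

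Suppose first that $\rho$ is odd. Since $\ov{\rho}$ is then odd and, by (2), absolutely irreducible, Khare--Wintenberger's proof of Serre's conjecture shows $\ov{\rho}$ is modular. The remaining hypotheses --- $l > 7$; $\ov{\rho}$ not of dihedral type, which together with $l > 7$ makes $\ov{\rho}|_{G_{\Q(\zeta_l)}}$ large enough for Taylor--Wiles patching; (1), that $\rho|_{G_{\Q_l}}$ is potentially semistable with distinct Hodge--Tate weights; and (3), which excludes the one residual local shape at $l$, namely a twist of an extension of the trivial character by the mod-$l$ cyclotomic character, for which the pertinent local deformation rings are not adequately understood --- are exactly those of a Kisin-type modularity lifting theorem, which (together with the known cases of the weight part of Serre's conjecture needed to produce a modular lift of $\ov{\rho}$ of the correct $l$-adic type) yields that $\rho$ is modular.

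The even case is the heart of \cite{CalegariFM} and is the step I expect to be the main obstacle. Now cusp forms cannot help, because their Galois representations are odd; so what must be shown is that no $\rho$ satisfying (1)--(3) is in fact even, equivalently that $\ov{\rho}$ admits no geometric lift of the prescribed local type at all. The approach is through the deformation theory of $\ov{\rho}$: fixing the determinant, the potentially semistable type and Hodge--Tate weights at $l$, and the ramification away from $l$, one studies the associated global deformation ring, and the key point is that for an even residual representation the archimedean term in the global Euler characteristic formula carries the opposite sign to the odd case, so that the expected dimension of the deformation space collapses. Converting this heuristic into an honest obstruction is the substance of the paper: it uses the structure of the completed cohomology of modular curves in the sense of Emerton, $p$-adic Hodge theory, and the known modularity lifting theorems, exploiting that the Galois representations carried by the completed cohomology of modular curves are odd. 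I would take this step as a black box; granting it, no $\rho$ satisfying the hypotheses is even, and the theorem reduces to the odd case.

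Finally, we record why this is the pertinent form of the statement here: the representation to which it will be applied below is $s_\lambda$, which has distinct Hodge--Tate weights $\{0, 2\}$ but is not guaranteed to be odd, so a potential automorphy theorem for $\GL_2$ over a totally real field would not suffice.
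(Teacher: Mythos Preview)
Your proposal is correct and matches the paper's treatment: the paper does not prove this theorem but simply records it as \cite[Theorem~1.1]{CalegariFM}, exactly as you identify in your first sentence. Your additional sketch of Calegari's strategy (odd case via Khare--Wintenberger plus Kisin-type lifting; even case ruled out via completed cohomology and the oddness of representations appearing there) is accurate and goes beyond what the paper itself provides, but it is expository surplus rather than a deviation.
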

Since $\chi_\lambda$ is automatically modular, it is enough to make sure $s_\lambda$ satisfies the hypotheses of \cref{thm:fm}. Since $s_\lambda$ has distinct non-consecutive Hodge-Tate weights, it satisfies (1) and (3). Thus we only need to show that we can choose $\lambda$ such that $\ov{s}_\lambda$ is not of dihedral type. 
\begin{lem}\label{lem:dihedral}
    Suppose we are in the reducible case. Then either for some $\lambda$ there exists a quadratic extension $E/\Q$ and a character $\psi_\lambda : G_E \to \ov{\Q}_l^\times$ such that $s_\lambda = \Ind_E^\Q \psi_\lambda$, or $\ov{s}_\lambda$ is not of dihedral type for all but finitely many $\lambda$.
\end{lem}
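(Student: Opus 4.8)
The plan is to argue by contraposition: I will assume that $\overline{s}_\lambda$ is of dihedral type for infinitely many $\lambda\in\mathcal L$ and produce a quadratic field $E/\Q$, a place $\lambda$, and a character $\psi_\lambda\colon G_E\to\overline{\Q}_l^\times$ with $s_\lambda\cong\Ind_E^\Q\psi_\lambda$. Let $S$ be a finite set of primes outside which $\mathcal S$ is unramified. For each $\lambda\in\mathcal L$ at which $\overline{s}_\lambda$ is of dihedral type there is a nontrivial quadratic character $\delta_\lambda\colon G_\Q\to\overline{\F}_l^\times$ with $\overline{s}_\lambda\otimes\delta_\lambda\cong\overline{s}_\lambda$; let $E_\lambda/\Q$ be the quadratic field it cuts out. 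Comparing restrictions to inertia at a prime $q\notin S\cup\{l\}$, where $\overline{s}_\lambda|_{I_q}$ is trivial while $(\overline{s}_\lambda\otimes\delta_\lambda)|_{I_q}=\delta_\lambda|_{I_q}^{\oplus 2}$, forces $\delta_\lambda$, and hence $E_\lambda$, to be unramified outside $S\cup\{l\}$.

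The key point is to show, for all but finitely many $l$, that $E_\lambda$ is unramified at $l$ as well, so that it lies in the finite set of quadratic fields unramified outside $S$. Since $\rho_\lambda=\chi_\lambda\oplus s_\lambda$, the representation $s_\lambda|_{G_{\Q_l}}$ is a direct summand of $\rho_\lambda|_{G_{\Q_l}}$; the latter is, up to a twist by the fixed Dirichlet character $\eps$, a subquotient of $H_{\text{\'et}}(S_{\Qbar},\Q_l)$, hence is crystalline whenever $l$ is a prime of good reduction for $S$ not dividing the conductor of $\eps$, with Hodge--Tate weights $\{0,2\}$ as noted above. By Fontaine--Laffaille theory, valid once $l>5$, the semisimplification of $\overline{s}_\lambda|_{I_l}$ is therefore either $1\oplus\overline{\omega}_l^2$ or $\psi_2^2\oplus\psi_2^{2l}$, where $\overline{\omega}_l$ is the mod $l$ cyclotomic character and $\psi_2$ the fundamental character of level $2$. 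Were $\delta_\lambda$ ramified at $l$, its restriction to $I_l$ would be the unique quadratic character of $I_l$, namely $\overline{\omega}_l^{(l-1)/2}=\psi_2^{(l^2-1)/2}$, and the relation $\overline{s}_\lambda|_{I_l}\cong\overline{s}_\lambda|_{I_l}\otimes\delta_\lambda|_{I_l}$ would translate into the congruence of multisets $\{0,2\}\equiv\{\tfrac{l-1}{2},\tfrac{l-1}{2}+2\}\pmod{l-1}$ in the first case, and the analogous congruence of $\{2,2l\}$ with $\{2+\tfrac{l^2-1}{2},\,2l+\tfrac{l^2-1}{2}\}$ modulo $l^2-1$ in the second; a direct check shows each fails for $l>5$. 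Hence $E_\lambda$ is unramified outside $S$.

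After discarding the finitely many $\lambda$ lying over bad primes, over the conductor of $\eps$, or over primes $\leq 5$, infinitely many $\lambda\in\mathcal L$ remain at which $\overline{s}_\lambda$ is of dihedral type, and by the pigeonhole principle a single quadratic field $E$ occurs as $E_\lambda$ for infinitely many of them, in particular for $\lambda$ lying over infinitely many distinct rational primes. Fix a prime $p\notin S$ inert in $E$. For each such $\lambda$ the dihedral relation forces $\tr\overline{s}_\lambda$ to vanish on $G_\Q\setminus G_E$, so $b_p\equiv 0\pmod\lambda$; since $b_p$ is a fixed algebraic integer divisible by infinitely many primes, $b_p=0$. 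Consequently $\tr s_\lambda(\Frob_p)=\tr(s_\lambda\otimes\delta_E)(\Frob_p)$ for every prime $p\notin S$ --- trivially when $p$ splits in $E$, and because both sides vanish when $p$ is inert. As $\lambda\in\mathcal L$ the representation $s_\lambda$ is irreducible, hence semisimple, so Brauer--Nesbitt together with the Chebotarev density theorem gives $s_\lambda\cong s_\lambda\otimes\delta_E$; by Clifford theory an irreducible two-dimensional representation admitting a nontrivial quadratic self-twist is induced from the corresponding index-two subgroup, so $s_\lambda\cong\Ind_E^\Q\psi_\lambda$ for a character $\psi_\lambda\colon G_E\to\overline{\Q}_l^\times$, which is the first alternative of the lemma.

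The genuinely delicate step is the local computation at $l$ in the second paragraph --- ruling out that the dihedral field $E_\lambda$ is ramified at $l$; without it the fields $E_\lambda$ could a priori vary with $l$ and the pigeonhole argument would break down. Everything else is routine bookkeeping with the compatible system $\mathcal S$.
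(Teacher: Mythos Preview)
Your proof is correct and follows the same underlying strategy as the paper: confine the possible dihedral fields to a finite set independent of $\lambda$, and then argue that either one such field $E$ forces $b_p=0$ for all inert primes (yielding $s_\lambda\cong\Ind_E^{\Q}\psi_\lambda$) or only finitely many $\lambda$ can be dihedral. The paper organises this as a direct two-case split over the set $Q$ of quadratic fields unramified outside $N$, picking for each $E\in Q$ a witness prime $p_E$ with $b_{p_E}\neq 0$; you run the contrapositive with a pigeonhole step instead. These are equivalent arrangements of the same idea.

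The substantive addition in your argument is the second paragraph. The paper simply asserts that if $\ov{s}_\lambda$ is dihedral then the associated quadratic field lies in $Q$, i.e.\ is unramified outside $N$, without addressing possible ramification at $l$ itself. Your Fontaine--Laffaille computation, showing that $\delta_\lambda|_{I_l}$ cannot be the ramified quadratic character when the Hodge--Tate weights are $\{0,2\}$ and $l>5$, is exactly what is needed to justify this step, and the paper's argument is strictly speaking incomplete without it. One small point: your sentence ``As $\lambda\in\mathcal L$ the representation $s_\lambda$ is irreducible'' misattributes the reason --- membership in $\mathcal L$ controls residual irreducibility of irreducible summands, not irreducibility of $s_\lambda$. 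The irreducibility of $s_\lambda$ comes instead from the standing hypothesis, made in the paragraph preceding the lemma, that no $\rho_\lambda$ splits as a sum of three characters.
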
 
\begin{proof}
    Let $N$ denote the product of all primes $p$ such that $\rho_\lambda$ is ramified at $p$ for at least one $\lambda$. Let $Q$ denote the set of all quadratic extensions of $G_\Q$ which are unramified a primes not dividing $N$. Each such extension $E/\Q$ has an associated quadratic character $\eps_{E}$. We will distinguish between the following two cases:\\
    Case 1: There exists an extension $E \in Q$ such that $b_p = 0$ for all $p \nmid lN$ inert in $E$. Then for all but finitely many primes $\tr s_\lambda(\Frob_p) = \tr (s_\lambda \otimes \eps_E)(\Frob_p)$, which implies $s_\lambda \cong s_\lambda \otimes \eps_E$. This in turn means that $s_\lambda \cong \Ind_E^\Q \psi_\lambda$ for some character $\psi_\lambda$ of $G_E$.\\
    Case 2: For each $E \in Q$ we can choose a prime $p_E$ inert in $E$ such that $p_E \nmid N$ and $b_{p_E} \neq 0$. Suppose $\ov{s}_\lambda$ is of dihedral type for some $\lambda$. Then there exists $E \in Q$ such that $\ov{s}_\lambda \cong \ov{s}_\lambda \otimes \eps_E$. This implies that $\lambda \mid b_{p_E}$. Therefore $\ov{s}_\lambda$ is not dihedral for $\lambda$ not dividing $\prod_{E \in Q} b_{p_E}$. 
\end{proof}
Putting together \cref{prop:irreducible}, \cref{thm:fm} and \cref{lem:dihedral} we are done with the proof of \cref{thm:main}.

\bibliographystyle{amsalpha}
\bibliography{refs}

\newcommand{\etalchar}[1]{$^{#1}$}
\providecommand{\bysame}{\leavevmode\hbox to3em{\hrulefill}\thinspace}
\providecommand{\MR}{\relax\ifhmode\unskip\space\fi MR }
% \MRhref is called by the amsart/book/proc definition of \MR.
\providecommand{\MRhref}[2]{%
  \href{http://www.ams.org/mathscinet-getitem?mr=#1}{#2}
}
\providecommand{\href}[2]{#2}
\begin{thebibliography}{BLGGT14}

\bibitem[ACC{\etalchar{+}}18]{10author}
Patrick~B. Allen, Frank Calegari, Ana Caraiani, Toby Gee, David Helm, Bao V.~Le
  Hung, James Newton, Peter Scholze, Richard Taylor, and Jack~A. Thorne,
  \emph{{Potential automorphy over CM fields}}, 2018.

\bibitem[BLGGT14]{BLGGT}
Thomas Barnet-Lamb, Toby Gee, David Geraghty, and Richard Taylor,
  \emph{Potential automorphy and change of weight}, Ann. of Math. (2)
  \textbf{179} (2014), no.~2, 501--609.

\bibitem[Cal12]{CalegariFM}
Frank Calegari, \emph{Even {G}alois representations and the {F}ontaine--{M}azur
  conjecture. {II}}, J. Amer. Math. Soc. \textbf{25} (2012), no.~2, 533--554.

\bibitem[Cal21]{CalegariBlog}
Frank Calegari, \emph{Potential automorphy for gl(n)}, 2021.

\bibitem[DV04]{DN}
Luis Dieulefait and Núria Vila, \emph{On the images of modular and geometric
  three-dimensional galois representations}, American Journal of Mathematics
  \textbf{126} (2004), no.~2, 335--361.

\bibitem[GHTT12]{adequate}
Robert Guralnick, Florian Herzig, Richard Taylor, and Jack Thorne,
  \emph{Adequate subgroups}, Appendix to [Tho12] (2012).

\bibitem[MT23]{MT23}
Konstantin Miagkov and Jack~A. Thorne, \emph{Automorphy lifting with adequate
  image}, Forum Math. Sigma \textbf{11} (2023), Paper No. e8, 31.

\bibitem[Qia23]{Lie}
Lie Qian, \emph{Potential automorphy for {$GL_n$}}, Invent. Math. \textbf{231}
  (2023), no.~3, 1239--1275.

\bibitem[Tho12]{Thorne12}
Jack Thorne, \emph{On the automorphy of {$l$}-adic {G}alois representations
  with small residual image}, J. Inst. Math. Jussieu \textbf{11} (2012), no.~4,
  855--920, With an appendix by Robert Guralnick, Florian Herzig, Richard
  Taylor and Thorne.

\bibitem[vGT94]{vGT94}
Bert van Geemen and Jaap Top, \emph{A non-selfdual automorphic representation
  of {${\rm GL}_3$} and a {G}alois representation}, Invent. Math. \textbf{117}
  (1994), no.~3, 391--401.

\end{thebibliography}
\nocite{Thorne12}

\end{document}